\documentclass[11pt,reqno]{amsart}

\usepackage{amsmath,amssymb,amsthm}
\usepackage[margin=28mm]{geometry}
\usepackage{hyperref}
\usepackage{enumitem}

\numberwithin{equation}{section}
\newtheorem{theorem}{Theorem}[section]
\newtheorem{lemma}[theorem]{Lemma}
\newtheorem{proposition}[theorem]{Proposition}
\newtheorem{corollary}[theorem]{Corollary}
\theoremstyle{remark}
\newtheorem{remark}[theorem]{Remark}

\begin{document}
\allowdisplaybreaks

\title{A Sharp Gradient Harnack Estimate for the Lane--Emden System}

\author{Sanghoon Lee}
\address[Sanghoon Lee]{Research Institute of Mathematics,
Seoul National University, 1 Gwanak-ro, Gwanak-gu,
Seoul 08826, Republic of Korea}
\email{lsh514@snu.ac.kr}

\author{Kui Li}
\address[Kui Li]{School of Mathematics and Statistics,
Zhengzhou University, Zhengzhou 450001, China}
\email{likui@zzu.edu.cn}

\author{Juncheng Wei}
\address[Juncheng Wei]{Department of Mathematics,
The Chinese University of Hong Kong,
Shatin, New Territories, Hong Kong}
\email{wei@math.cuhk.edu.hk}

\date{\today}

\subjclass[2020]{Primary 35J47, 35B45; Secondary 35B08, 35B50}

\keywords{Lane--Emden system, pointwise gradient estimates,
Bernstein method}

\begin{abstract}
We establish a sharp pointwise gradient estimate for positive entire
classical solutions of the Lane--Emden system when one of the exponents
lies below the Serrin exponent $n/(n-2)$.
The estimate generalizes the positivity of the scalar curvature
of the associated conformal metric in the biharmonic case $q=1$.
The coefficient in the estimate is optimal on the critical hyperbola.
\end{abstract}

\maketitle

\section{Introduction}
The Lane--Emden system
\begin{equation}\label{eq:lane-emden}
-\Delta u=v^p,\qquad -\Delta v=u^q
\qquad\text{in }\mathbb R^n,
\end{equation}
with $p,q>0$ and $n\ge3$, is a fundamental model for nonlinear
elliptic systems. Its scaling properties distinguish the critical
hyperbola $\frac1{p+1}+\frac1{q+1}=\frac{n-2}{n}$.
The classical Lane--Emden conjecture asserted that
\eqref{eq:lane-emden} admits no nontrivial nonnegative entire
classical solution in the subcritical range
$\frac1{p+1}+\frac1{q+1}>\frac{n-2}{n}$.
This question motivated substantial work on Liouville theorems
for cooperative systems: nonexistence was established in low
dimensions \cite{SerrinZou1996,Souplet2009}, while
higher-dimensional results covered progressively larger portions
of the subcritical region, including the recent Obata-type
approach of Li, Li, and Wei \cite{LiLiWei2025}.
Recently, OpenAI announced a proof of the full subcritical
nonexistence statement, together with its weighted
H\'{e}non--Lane--Emden analogue \cite{OpenAI2026}.

Our work is motivated in particular by the case
$q=1$, in which the Lane--Emden system reduces to
$\Delta^2v=v^p$. For $n\ge5$, the critical exponent
$p=(n+4)/(n-4)$ corresponds to the constant $Q$-curvature
equation. For the corresponding conformal problem on closed
positive Einstein manifolds, V\'etois \cite{Vetois2024}
established a uniqueness theorem through an Obata-type
integral identity. A crucial ingredient in this argument is
the positivity of the scalar curvature $R$ of the conformal
metric: it ensures the nonnegativity of the term $R|E|^2$,
where $E$ denotes the trace-free Ricci tensor, and thereby
allows the identity to yield rigidity. This scalar-curvature
positivity follows from the work of Gursky--Malchiodi
\cite{GurskyMalchiodi2015}, building on the positivity theorem
for the Paneitz operator developed by Gursky
\cite{Gursky1999}. Adapting this type of argument through
invariant tensors, Ma--Wu--Wu \cite{MaWuWu2025} proved a
Liouville theorem for the subcritical biharmonic equation
on complete noncompact manifolds with nonnegative Ricci
curvature, under the superharmonicity assumption.

Motivated by this connection, we seek a pointwise inequality
for $q>1$ analogous to the positivity of scalar curvature
in the biharmonic case. Our main result is the following.

\begin{theorem}\label{thm:scalar}
Let $(u,v)$ be a positive classical solution of
\eqref{eq:lane-emden}, where $n\ge3$, $p \ge q>1$,
and $2<(n-2)q<n$. Then
\begin{equation}\label{eq:scalar-goal}
|\nabla v|^2
<\frac{(n-2)q-2}{n-(n-2)q}\,u^qv
\qquad\text{in }\quad \mathbb R^n.
\end{equation}
\end{theorem}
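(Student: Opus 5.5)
We would prove \eqref{eq:scalar-goal} by a Bernstein-type maximum principle argument applied to the auxiliary quantity
\[
F=\frac{|\nabla v|^2}{u^q v}, \qquad\text{or equivalently}\qquad W=|\nabla v|^2-\lambda\,u^q v,\quad \lambda=\frac{(n-2)q-2}{n-(n-2)q}.
\]
It is more convenient to work with a power substitution $v=w^{\beta}$, chosen so that the target becomes $|\nabla w|^2<c\,u^q w^{2-\beta}\cdot(\ldots)$ and the Bochner computation closes.

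The first step is to collect the basic facts available for positive entire solutions. These are the comparison $v^{q+1}\le \frac{q+1}{p+1}u^{p+1}$ (Souplet's inequality, valid since $p\ge q$) and the superharmonicity of $u$ and $v$. We also need the decay behaviour of positive solutions when $(n-2)q<n$. In this range $v$ is superharmonic with $-\Delta v=u^q$ and $u\gtrsim |x|^{2-n}$, so $\int u^q=\infty$. This rules out fast decay, and the standard consequence is the a priori estimate $u^q\le C|x|^{-2}v\cdot(\ldots)$. Combined with gradient estimates, this gives $F$ bounded on $\mathbb R^n$.

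The second step is the computation of $\Delta F$, or of $\Delta W$ with a cutoff. Bochner's formula gives
\[
\tfrac12\Delta|\nabla v|^2=|\nabla^2 v|^2-\nabla v\cdot\nabla(u^q).
\]
We then use the refined Kato/trace inequality $|\nabla^2 v|^2\ge \frac1n(\Delta v)^2+\ldots$, splitting along $\nabla v$ as in the classical Gidas--Spruck / Serrin--Zou Bernstein argument. The term $\nabla v\cdot\nabla u^q=qu^{q-1}\nabla u\cdot\nabla v$ is controlled using the PDE for $u$ together with the constraint $p\ge q$.

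At an interior maximum point of $F$ we have $\nabla F=0$. This lets us express $\nabla^2 v(\nabla v,\cdot)$ in terms of $\nabla u$ and $\nabla v$. Substituting this into $\Delta F\le 0$ should produce a quadratic form in $(|\nabla v|^2/(u^qv),\ \nabla u\cdot\nabla v/\ldots)$. This form is strictly positive precisely when $F>\lambda$, where the threshold $\lambda$ comes from the factor $\frac{1}{n}$ in the trace inequality, with the quantity $n-(n-2)q$ appearing as the denominator. That yields a contradiction, so $\sup F\le\lambda$.

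If the supremum is not attained, we use a cutoff $\eta(x/R)$ and show that the error terms vanish as $R\to\infty$, using the bound on $F$ from the first step. Finally, strict inequality follows from the strong maximum principle applied to the elliptic inequality satisfied by $W$ (or $F$) on the set where it equals $\lambda$.

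The main obstacle is the sign of the cross term $u^{q-1}\nabla u\cdot\nabla v$ in the Bochner identity. It has no sign a priori, and handling it will need a second auxiliary relation between $\nabla u$ and $\nabla v$, derived from Souplet's comparison function, possibly built into a modified test function of the form $|\nabla v|^2/(u^q v)+\mu\,(\ldots)$. We would first try to absorb it using the pointwise bound $v^{q+1}\le \frac{q+1}{p+1}u^{p+1}$ at the maximum point, which is where $p\ge q$ enters.
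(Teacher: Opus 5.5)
The pointwise part of your plan is sound, and the obstacle you single out is not the real one. At an interior critical point of $F=|\nabla v|^2/(u^qv)$ with $F>0$, put $m=(n-2)q$, $\gamma=|\nabla v|/v$, $\mu=u^q/|\nabla v|$ and $\alpha=\nabla u\cdot\nabla v/(u|\nabla v|)$. Use $\nabla\log F=0$ to eliminate $\nabla^2v(\nabla v,\cdot)$, and apply the refined Kato inequality in the direction of $\nabla v$. The cross term $-2q\alpha\mu$ is then absorbed by completing the square in $\alpha$, using the term $q|\nabla u|^2/u^2$ that comes from $-q\Delta\log u$. One finds
\[
\Delta\log F\ \ge\ \frac{(n-m)(F-F_*)(F+2)}{2(n-1)-m}\,\frac{u^{2q}}{|\nabla v|^2}+q\,\frac{v^p}{u},
\qquad F_*=\frac{m-2}{n-m},
\]
so an interior maximum with $F\ge F_*$ is indeed impossible. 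Souplet's comparison plays no role here: it is a zeroth-order relation and could not control $\nabla u$ anyway. The paper's argument never uses $p\ge q$ either.

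The genuine gap is the passage from the absence of an interior maximum to a bound on all of $\mathbb R^n$.

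\textbf{Step 1 is unjustified.} Your bound on $F$ there is only asserted, with placeholders, and it is a non-sharp version of the theorem itself.

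\textbf{The cutoff step cannot close, even granting that bound.} $F$ is invariant under the scaling of the system, so every term in $\Delta\log F$, good or bad, scales like $(\text{length})^{-2}$. At a maximum point $x_R$ of $\eta_RF$, the cutoff errors are of order $R^{-2}$, or $R^{-1}$ times first-order quantities such as $|\nabla u|/u$. Your good term $(F-F_*)u^{2q}/|\nabla v|^2$ is itself of order $|x_R|^{-2}$; this is exactly what happens for the ground states, where also $F\to F_*$ at infinity. So letting $R\to\infty$ yields neither a contradiction nor a bound on $F$.

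\textbf{The missing idea} is to replace $F$ by a non-scale-invariant quantity. The paper uses $Z_\lambda=u^{-2s}(\lambda|\nabla v|^2/v^2-u^q/v)$. For it, the proof of Lemma~\ref{lem:weighted-bernstein} gives $\sup_{B_R}Z_\lambda\le CR^{-2}\sup_{B_{2R}}u^{-2s}\to0$, thanks to $u\ge c(1+|x|)^{2-n}$ and $2s<2/(n-2)$.

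The price is that the differential inequality must hold on all of $\{Z_\lambda>0\}$, not just at critical points. This is where the paper needs three ingredients:
\begin{enumerate}[label=(\roman*)]
\item the Obata-type identity;
\item the parameters $a,b$ and the drift weight $u^xv^{-2b}$, chosen to cancel the mixed Hessian and mixed gradient terms;
\item the choice of $s$ as a root of $q(q-1)+\frac{8(n-1)}{n}a^2-4aq=0$, which collapses all $|\nabla u|^2$ terms into $kZ_\lambda|\nabla u/u-(x/k)\nabla v/v|^2$.
\end{enumerate}
Finally, the terms $A_\lambda Z_\lambda|\nabla v|^2/v^2$ and $(2\lambda/n-1)u^{2s}Z_\lambda^2$ need not have a sign for $\lambda$ near $\lambda_*$. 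So the sharp constant is reached only through a continuity argument in $\lambda$, where each step uses the bound $0<u^{2s}Z_\lambda\le h|\nabla v|^2/v^2$ inherited from $Z_{\lambda_0}\le0$.
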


We would like to also remark that the coefficient in \eqref{eq:scalar-goal} is optimal on
the critical hyperbola: for positive radial ground states,
the asymptotics in \cite{HulshofVanDerVorst1996} imply
\[
\lim_{r\to\infty}\frac{|\nabla v(r)|^2}{2u(r)^qv(r)}
=\frac{(n-2)q-2}{2[n-(n-2)q]}.
\]
Thus the bound is asymptotically attained at infinity,
despite being strict at every finite point.

At and beyond the critical hyperbola, positive entire
solutions exist \cite{HulshofVanDerVorst1996,SerrinZou1998},
and quantitative pointwise estimates remain relevant.
Early work established existence and a priori bounds for
cooperative elliptic systems
\cite{ClementDeFigueiredoMitidieri1992,Souto1995,Zou2002},
while Rellich-type identities \cite{Mitidieri1993} and
Liouville theorems
\cite{DeFigueiredoFelmer1994,Mitidieri1996,
BirindelliMitidieri1998,BuscaManasevich2002}
provided tools for nonexistence results. Further approaches include moving-sphere and monotonicity
methods \cite{ReichelZou2000,DeFigueiredoSirakov2005},
integral formulations \cite{MaChen2006,ChenLi2009},
and dynamical systems methods for radial solutions
\cite{BidautVeronGiacomini2010}.
Cheng and Huang \cite{ChengHuang2019} related subcritical
nonexistence to an energy-type a priori estimate.
Extensions include weighted systems \cite{LiZhang2019},
half-space problems \cite{LiSouplet2025}, and
Hartree-type systems \cite{DuLiYang2025}.

For pointwise control, Souplet \cite{Souplet2009}
established the component comparison
$\frac{v^{p+1}}{p+1}\le\frac{u^{q+1}}{q+1}$
for $p\ge q\ge1$, with related weighted comparisons
in \cite{PhanSouplet2012}.
Liouville theorems and rescaling yield universal growth
and decay estimates \cite{PolacikQuittnerSouplet2007},
while local Harnack-type estimates for planar systems
with $pq=1$ were obtained in \cite{LeiteMontenegro2025}.
Differential estimates for biharmonic equations were
developed in \cite{FazlyWeiXu2015,NgoNguyenPhan2018,
MaWuWu2025}.
Theorem~\ref{thm:scalar} provides such a pointwise
gradient estimate for the coupled system with $q>1$.

The paper is organized as follows.
Section~\ref{sec:basic} proves a weighted Bernstein estimate.
In Section~\ref{sec:scalar}, we present a motivating example,
derive the differential identity, and choose the coefficients
to obtain the differential inequality used in the proof.
Finally, Section~\ref{sec:main} proves Theorem~\ref{thm:scalar}
by a continuity argument and establishes the optimality of
the coefficient on the critical hyperbola.

\textbf{AI disclosure.} The authors used AI tools to help organize their original proofs
and improve the English exposition. AI tools also assisted
in computing the optimal constant in Section~\ref{subsec:final-identity}.
The authors have reviewed and verified all mathematical arguments
and take full responsibility for the content of the manuscript.

\section*{Acknowledgements}
Juncheng Wei acknowledges funding from China's National
R\&D Program (Grant No. 2022YF\\A1005602) and Hong Kong General
Research Fund project (No. 14303125)
``On Fujita equation in the critical or supercritical regime''.

\section{Some basic estimates}\label{sec:basic}

Let $\eta_R\in C_c^\infty(B_{2R})$ such that
\begin{equation}\label{eq:cutoff}
 0\le\eta_R\le1,\quad \eta_R=1\text{ in }B_R,
 \quad |\nabla\eta_R|\le c_n/R,\quad |\nabla^2 \eta_R|\le c_n/R^2.
\end{equation}

We recall the following lemma from \cite{MaWuWu2025}, which provides
some motivation for the Bernstein technique used in this paper.

\begin{lemma}\cite[Lemma~2.1]{MaWuWu2025}.\label{lem:original-bernstein}
Let $w\in C^4(\mathbb R^n)$ be positive, and let $a\ge0$. Define
\[
Z=\frac{\Delta w}{w}+a\frac{|\nabla w|^2}{w^2}.
\]
Suppose that there exists $\varepsilon_0>0$, independent of $R$,
such that
\[
\operatorname{div}(w^{2-2a}\nabla Z)
\ge \varepsilon_0 w^{2-2a}
\left(Z^2+\frac{|\nabla w|^4}{w^4}\right)
\]
at every positive maximum point of $\eta_R^4Z$, for all
sufficiently large $R$. Then $Z\le0$ in $\mathbb R^n$.
\end{lemma}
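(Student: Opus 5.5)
The plan is the standard localized maximum-principle (Bernstein) argument. Suppose $\sup Z>0$. Then for $R$ large, $\eta_R^4 Z$ attains a positive maximum at some $x_0\in B_{2R}$. At $x_0$ we have the two first-order facts
\[
\nabla(\eta_R^4Z)=0,\qquad\text{i.e.}\qquad \nabla Z=-4Z\eta_R^{-1}\nabla\eta_R ,
\]
and the second-order fact
\[
\operatorname{div}\bigl(w^{2-2a}\nabla(\eta_R^4Z)\bigr)\le0 ,
\]
the latter because at an interior maximum the gradient vanishes and the Laplacian is nonpositive.

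The key steps are as follows.

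\textbf{Expanding the divergence.} Write
\[
\operatorname{div}\bigl(w^{2-2a}\nabla(\eta^4Z)\bigr)
=\eta^4\operatorname{div}(w^{2-2a}\nabla Z)+2w^{2-2a}\nabla\eta^4\cdot\nabla Z+Z\operatorname{div}(w^{2-2a}\nabla\eta^4).
\]
Substitute the critical-point relation for $\nabla Z$. The middle term then becomes $-128\,w^{2-2a}\eta^2|\nabla\eta|^2Z$. The last term equals
\[
Z\Bigl(w^{2-2a}\Delta\eta^4+(2-2a)\,w^{1-2a}\nabla w\cdot\nabla\eta^4\Bigr).
\]

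\textbf{Applying the hypothesis.} Using the assumed inequality at $x_0$ and dividing by $w^{2-2a}$, we get
\[
\varepsilon_0\eta^4\Bigl(Z^2+\frac{|\nabla w|^4}{w^4}\Bigr)
\le C\eta^2\frac{Z}{R^2}+C\eta^3\,\frac{|\nabla w|}{w}\,\frac{Z}{R},
\]
with $C=C(n,a)$, by the cutoff bounds \eqref{eq:cutoff}.

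\textbf{Absorbing the mixed term.} This is the only mildly delicate point: the term $\eta^3|\nabla w|w^{-1}Z/R$ must be absorbed using the $|\nabla w|^4/w^4$ part of the hypothesis, which is exactly why that term is included. By Young's inequality,
\[
\eta^3\frac{|\nabla w|}{w}\frac{Z}{R}\le \delta\eta^4\frac{|\nabla w|^4}{w^4}+C_\delta\,\eta^{8/3}\frac{Z^{4/3}}{R^{4/3}},
\]
and then $\eta^{8/3}Z^{4/3}R^{-4/3}\le\delta\eta^4Z^2+C_\delta\eta^{0}R^{-4}$ (Young again with exponents $3/2,3$). Similarly $\eta^2Z R^{-2}\le\delta\eta^4Z^2+C_\delta R^{-4}$.

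\textbf{Conclusion.} With $\delta$ small relative to $\varepsilon_0$, these estimates give $\eta^4Z^2\le CR^{-4}$ at $x_0$, so $\max\eta^4Z=(\eta^4Z)(x_0)\le (\eta^4 Z^2)^{1/2}(x_0)\le CR^{-2}$ since $\eta\le1$. Hence $\sup_{B_R}Z\le CR^{-2}$, and letting $R\to\infty$ (with $C$ independent of $R$) gives $Z\le0$, contradicting $\sup Z>0$. Only $C^4$ regularity is needed, so that $Z\in C^2$. No step is a serious obstacle; the Young's-inequality bookkeeping in the absorption step is the one place requiring care.
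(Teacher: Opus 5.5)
Your proposal is correct and is the same localized maximum-principle argument the paper relies on. The paper only cites this lemma from Ma--Wu--Wu, but its proof of the weighted analogue, Lemma~\ref{lem:weighted-bernstein}, runs the same way: the critical-point relation for $\nabla(\eta^4 Z)$, the sign of the second-order term at the maximum, and Young's inequality to absorb the drift term, with the $Z^2$ term in your hypothesis playing the role of the upper bound used there. One harmless slip: the cross term is $-32\,w^{2-2a}\eta^2|\nabla\eta|^2Z$, not $-128\,w^{2-2a}\eta^2|\nabla\eta|^2Z$, which only changes the constant $C$.
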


We shall need the following weighted version of the
Bernstein estimate, which allows a more general drift term.

\begin{lemma}\label{lem:weighted-bernstein}
Let $u,v\in C^2(\mathbb R^n)$ be positive, and let
$W\in C^2(\mathbb R^n)$. Suppose that $\sigma\ge0$ satisfies
\begin{equation}\label{eq:weight-growth}
\lim_{R\to\infty}R^{-2}\sup_{B_{2R}}u^{-\sigma}=0.
\end{equation}
Assume that there exist constants $C_1, C_2>0$, $\beta\in\mathbb R$,
and a continuous vector field $X$ such that, on $\{W>0\}$,
\begin{align}
0<W&\le C_1 u^{-\sigma}\frac{|\nabla v|^2}{v^2},
\label{eq:weighted-upper}\\
\Delta W+
\left(X+\beta\frac{\nabla v}{v}\right)\cdot\nabla W
&\ge C_2\left(
u^{-\sigma}\frac{|\nabla v|^4}{v^4}
+W|X|^2\right).
\label{eq:weighted-bernstein-hyp}
\end{align}
Then $W\le0$ in $\mathbb R^n$.
\end{lemma}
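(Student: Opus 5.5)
We argue by contradiction with a localized maximum principle. Suppose $\sup_{\mathbb R^n}W>0$ and consider $G_R=\eta_R^4 W$ with $\eta_R$ as in \eqref{eq:cutoff}. Since $\eta_R$ has compact support and $W$ is continuous, $G_R$ attains a positive maximum at some $x_R\in B_{2R}$ once $R$ is large (take $R$ so that $B_R$ contains a point where $W>0$). In particular $W(x_R)>0$, so \eqref{eq:weighted-upper} and \eqref{eq:weighted-bernstein-hyp} hold at $x_R$. The aim is to show $\max G_R\to0$ as $R\to\infty$. Since $G_R\ge W$ on $B_R$, this forces $W\le0$.

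At $x_R$ we have $\nabla(\eta^4W)=0$, that is $\nabla W=-4W\eta^{-1}\nabla\eta$, and $\Delta(\eta^4W)\le0$. Expanding the latter and substituting the gradient relation gives
\[
\eta^4\Delta W\le -8\eta^3\nabla\eta\cdot\nabla W-W\Delta(\eta^4)
\le C W\eta^2\bigl(|\nabla\eta|^2+\eta|\nabla^2\eta|\bigr)\le \frac{C}{R^2}\eta^2 W .
\]
The drift terms are handled with the same gradient relation:
\[
\eta^4\Big(X+\beta\frac{\nabla v}{v}\Big)\cdot\nabla W=-4W\eta^3\nabla\eta\cdot\Big(X+\beta\frac{\nabla v}{v}\Big)
\le \frac{C}{R}\eta^3W\Big(|X|+|\beta|\frac{|\nabla v|}{v}\Big).
\]
Multiplying \eqref{eq:weighted-bernstein-hyp} by $\eta^4$ and combining these bounds, we obtain at $x_R$
\[
C_2\eta^4\Big(u^{-\sigma}\frac{|\nabla v|^4}{v^4}+W|X|^2\Big)\le \frac{C}{R^2}\eta^2W+\frac CR\eta^3W|X|+\frac CR\eta^3W\frac{|\nabla v|}{v}.
\]

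We absorb the right-hand side into the left. For the $X$ term, Young's inequality gives $\frac CR\eta^3W|X|\le \frac{C_2}{2}\eta^4W|X|^2+\frac{C}{R^2}\eta^2W$, so $|X|$ disappears; this is why the hypothesis includes the term $W|X|^2$. For the $\nabla v$ term, write $Y=\eta^4u^{-\sigma}|\nabla v|^4/v^4$ and $G=\eta^4W$. By \eqref{eq:weighted-upper}, $G\le C_1\eta^4u^{-\sigma}|\nabla v|^2/v^2=C_1 (u^{-\sigma})^{1/2}\eta^2 Y^{1/2}$, so
\[
G^2\le C\,u^{-\sigma}(x_R)\,Y\le C\,M_R\,Y,\qquad M_R:=\sup_{B_{2R}}u^{-\sigma},
\]
which gives $Y\ge G^2/(CM_R)$. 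The remaining terms are bounded by
\[
\frac CR\eta^3W\frac{|\nabla v|}{v}=\frac CR\,G^{3/4}\,W^{1/4}\eta^{-1}\frac{|\nabla v|}{v}\cdot \eta^{...}
\]
To keep the bookkeeping simple, we instead bound $W\le C_1u^{-\sigma}|\nabla v|^2/v^2$ directly, which gives $\eta^3W|\nabla v|/v\le C\,\eta^3 u^{-\sigma}|\nabla v|^3/v^3\le C\,(u^{-\sigma})^{1/4}\eta^{0}Y^{3/4}$. Young's inequality then yields $\frac CR M_R^{1/4}Y^{3/4}\le \frac{C_2}{4}Y+\frac{C}{R^4}M_R$. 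For the $R^{-2}\eta^2W$ term, Cauchy--Schwarz gives $\eta^2W\le C M_R^{1/2}Y^{1/2}$, so $\frac{C}{R^2}M_R^{1/2}Y^{1/2}\le\frac{C_2}{4}Y+\frac{C}{R^4}M_R$. Collecting everything, $Y\le C R^{-4}M_R$. Therefore
\[
\max G_R=G\le C(M_RY)^{1/2}\le C\,R^{-2}M_R,
\]
which tends to $0$ by \eqref{eq:weight-growth}.

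The main obstacle is this absorption step. The powers of $\eta$ and of $u^{-\sigma}$ must come out exactly homogeneous, which is why the cutoff is raised to the fourth power. The $\beta\nabla v/v$ drift is the delicate term: it is only controllable because $W$ is dominated by $|\nabla v|^2/v^2$ through \eqref{eq:weighted-upper}. Without that domination the argument would fail, so one should verify carefully that every exponent balances to produce exactly $R^{-2}M_R$.
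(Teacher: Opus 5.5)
Your proof is correct and follows essentially the same route as the paper. Both use the maximum point of $\eta_R^4W$, both absorb the $X$-drift by Young's inequality against the $W|X|^2$ term, and both control the $\beta\nabla v/v$ drift through \eqref{eq:weighted-upper}, reaching $\sup_{B_R}W\le CR^{-2}\sup_{B_{2R}}u^{-\sigma}$. The only difference is bookkeeping: the paper divides out $u(x_R)^{-\sigma}$ to get the $u$-free bound $\eta^2|\nabla v|^2/v^2\le C/R^2$ at $x_R$, whereas you carry $M_R$ through homogeneous Young inequalities. You should also delete the abandoned, incomplete display ending in ``$\eta^{...}$''.
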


\begin{proof}

Suppose that $\eta^4W$ attains a positive maximum at $x_R$.
Then $\eta(x_R)>0$, $W(x_R)>0$, and
\[
\nabla W=-4W\frac{\nabla\eta}{\eta},
\qquad
\Delta(\eta^4W)\le0
\quad\text{at }x_R.
\]
All quantities in the following estimates are evaluated at $x_R$. Expanding the latter inequality and substituting the identity
for $\nabla W$, we obtain
\[
\eta^4\Delta W
\le -4W\eta^3\Delta\eta
+20W\eta^2|\nabla\eta|^2.
\]
Thus, the differential inequality for $W$ gives
\begin{align*}
&C_2u^{-\sigma}\eta^4\frac{|\nabla v|^4}{v^4}
+C_2W\eta^4|X|^2\\
&\le
-4W\eta^3\Delta\eta
+20W\eta^2|\nabla\eta|^2
-4W\eta^3
\left(X+\beta\frac{\nabla v}{v}\right)
\cdot\nabla\eta.
\end{align*}

By Young's inequality,
\[
C_2W\eta^4|X|^2
+4W\eta^3X\cdot\nabla\eta
\ge -\frac4{C_2}W\eta^2|\nabla\eta|^2.
\]
we obtain
\[
u^{-\sigma}\eta^4\frac{|\nabla v|^4}{v^4}
\le
C'W\left(
\frac{\eta^2}{R^2}
+\frac{\eta^3}{R}\frac{|\nabla v|}{v}
\right).
\]
On the other hand, \eqref{eq:weighted-upper} gives
\[
0<W\le C_1u^{-\sigma}\frac{|\nabla v|^2}{v^2},
\]
so $|\nabla v|>0$ at $x_R$. Substituting this upper bound
for $W$, we get
$u^{-\sigma}\eta^2|\nabla v|^2/v^2$, we obtain
\[
\eta^2\frac{|\nabla v|^2}{v^2}
\le \frac{C'}{R^2}
+\frac{C'\eta}{R}\frac{|\nabla v|}{v}.
\]
Another application of Young's inequality gives
\[
\eta^2\frac{|\nabla v|^2}{v^2}
\le \frac{C'}{R^2}.
\]
Using \eqref{eq:weighted-upper} once more, we conclude that
\[
\sup_{B_R}W
\le (\eta^4W)(x_R)
\le C_1u(x_R)^{-\sigma}\eta(x_R)^2
\left(\eta^2\frac{|\nabla v|^2}{v^2}\right)(x_R)
\le \frac{C'}{R^2}\sup_{B_{2R}}u^{-\sigma}.
\]
Letting $R\to\infty$ and using
\eqref{eq:weight-growth}, we obtain $W\le0$ throughout
$\mathbb R^n$, completing the proof.
\end{proof}

Finally, we record an elementary lower bound for positive
superharmonic functions.
\begin{lemma}
\label{lem:lower-barrier}
If $u>0$ and $-\Delta u\ge0$ in $\mathbb R^n$, $n\ge3$, then $
 u(z)\ge c(1+|z|)^{2-n}.
$

\end{lemma}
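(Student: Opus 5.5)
The plan is to compare $u$ on a large ball with a multiple of the fundamental solution, using the weak maximum principle on an annulus. The constant $c$ will depend on $u$ (through $\min_{\partial B_1}u$) and on $n$.

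\emph{Setting up the comparison.} Set $m=\min_{|z|\le1}u>0$; this is positive because $u$ is continuous and positive on the compact ball. For $R>1$ consider on the annulus $A_R=\{1<|z|<R\}$ the function
\[
h_R(z)=m\,\frac{|z|^{2-n}-R^{2-n}}{1-R^{2-n}}.
\]
This function is harmonic in $A_R$, since $n\ge3$ makes $|z|^{2-n}$ harmonic away from the origin. On the boundary it satisfies $h_R=m\le u$ on $|z|=1$ and $h_R=0<u$ on $|z|=R$.

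\emph{Applying the maximum principle.} The difference $u-h_R$ is superharmonic in $A_R$, because $-\Delta(u-h_R)=-\Delta u\ge0$. It is also nonnegative on $\partial A_R$. The weak minimum principle for classical superharmonic functions on bounded domains therefore gives $u\ge h_R$ in $A_R$.

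\emph{Passing to the limit.} Fix $z$ with $|z|>1$ and let $R\to\infty$. Since $R^{2-n}\to0$, we obtain $u(z)\ge m|z|^{2-n}$. Using $|z|\le 1+|z|$, this becomes $u(z)\ge m(1+|z|)^{2-n}$ for $|z|\ge1$. For $|z|\le1$ the bound is immediate: $u(z)\ge m\ge m(1+|z|)^{2-n}$, because $(1+|z|)^{2-n}\le1$. Hence the estimate holds on all of $\mathbb R^n$ with $c=m$.

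\emph{Main point of care.} The step that needs attention is the justification of the comparison. Only $-\Delta u\ge0$ in the classical sense is given, which is enough for the minimum principle. Letting $R\to\infty$ is essential: it removes the boundary value at $|z|=R$ without needing any decay or growth information on $u$ at infinity.
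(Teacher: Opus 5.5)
Your proof is correct and is essentially the paper's argument: compare $u$ on $B_R\setminus\overline{B_1}$ with the normalized truncated fundamental solution, apply the minimum principle, then let $R\to\infty$. Taking $m=\min_{\overline{B_1}}u$ instead of $\min_{\partial B_1}u$ also covers the region $|z|\le1$ directly, which the paper leaves implicit.
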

\begin{proof}
For $R>1$, compare $u$ on $B_R\setminus\overline{B_1}$
with
\[
 \min_{\partial B_1}u\cdot \,
 \bigg(\frac{|z|^{2-n}-R^{2-n}}{1-R^{2-n}}\bigg).
\]
Letting $R\to\infty$
gives the asserted lower bound.
\end{proof}
In particular, we remark that \eqref{eq:weight-growth} holds for
$0\le\sigma<2/(n-2)$.

\section{Derivation of the differential inequality}\label{sec:scalar}

\subsection{A motivating example:}

We begin with a simple weighted differential inequality that
illustrates the cancellation used in the general argument.
Consider the auxiliary function
\[
F_a:=\frac{|\nabla v|^2}{2u^q}-C_a v.
\]
The weight $u^{2q-4a}$ allows us to adjust the mixed Hessian term,
while the constant $C_a$ is chosen to cancel the remaining
multiple of $\langle\nabla u,\nabla v\rangle$.

\begin{proposition}
\label{prop:b-zero}
For
$
\frac q2<a<\frac{nq}{4},
$
define
\[
C_a:=\frac{q-4a/n}{4a-2q}>0,
\qquad
A_q(a):=
\frac{q(q+1)}2+q(2a-q)-\frac{4(n-1)}n a^2.
\]
Then, we have the following inequality:
\begin{align}
\operatorname{div}\left(
u^{2q-4a}\nabla F_a
\right)
\ge{}&
A_q(a)\,
u^{q-4a-2}|\nabla u|^2|\nabla v|^2
\notag\\
&+\left(C_a+\frac1n\right)u^{3q-4a}
+\frac q2\,u^{q-4a-1}v^p|\nabla v|^2.
\label{eq:b-zero-inequality}
\end{align}
\end{proposition}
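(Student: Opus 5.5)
The plan is to compute $\operatorname{div}(u^{2q-4a}\nabla F_a)$ directly, using the Bochner formula and the system $\Delta v=-u^q$, $\Delta u=-v^p$. Then I would split $\nabla^2 v$ into its trace and trace-free parts. The choice of $C_a$ should cancel all the $\langle\nabla u,\nabla v\rangle$ terms, and a sharp algebraic bound on the trace-free part should produce the coefficient $A_q(a)$.

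\textbf{Step 1 (expansion).} Since $\operatorname{div}(u^{2q-4a}\nabla F_a)=u^{2q-4a}\Delta F_a+(2q-4a)u^{2q-4a-1}\nabla u\cdot\nabla F_a$, I need $\nabla F_a$ and $\Delta F_a$. The gradient is
\[
\nabla F_a=u^{-q}\nabla^2v\,\nabla v-\tfrac q2u^{-q-1}|\nabla v|^2\nabla u-C_a\nabla v .
\]
For the Laplacian I use $\Delta\tfrac12|\nabla v|^2=|\nabla^2v|^2+\nabla v\cdot\nabla\Delta v=|\nabla^2v|^2-qu^{q-1}\nabla u\cdot\nabla v$ and $\Delta u^{-q}=qu^{-q-1}v^p+q(q+1)u^{-q-2}|\nabla u|^2$. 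Collecting terms after multiplying by the weight, I expect
\begin{align*}
\operatorname{div}(u^{2q-4a}\nabla F_a)={}&u^{q-4a}|\nabla^2v|^2-4a\,u^{q-4a-1}\nabla^2v(\nabla u,\nabla v)\\
&+\Big[\tfrac{q(q+1)}2+q(2a-q)\Big]u^{q-4a-2}|\nabla u|^2|\nabla v|^2+\tfrac q2u^{q-4a-1}v^p|\nabla v|^2\\
&+C_au^{3q-4a}+\big[-q+C_a(4a-2q)\big]u^{2q-4a-1}\nabla u\cdot\nabla v .
\end{align*}
Two features of this expansion are what make the argument work:
\begin{itemize}
\item The mixed Hessian coefficient is $-2q+(2q-4a)=-4a$; this is exactly why the weight $u^{2q-4a}$ was chosen.
\item By the definition of $C_a$, the last bracket equals $-q+(q-4a/n)=-4a/n$.
\end{itemize}

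\textbf{Step 2 (trace-free splitting).} I write $\nabla^2v=T-\frac{u^q}{n}I$ with $T$ trace-free. Then
\[
|\nabla^2v|^2=|T|^2+\frac{u^{2q}}n,
\qquad
\nabla^2v(\nabla u,\nabla v)=T(\nabla u,\nabla v)-\frac{u^q}n\nabla u\cdot\nabla v .
\]
The first identity produces the extra $\frac1n u^{3q-4a}$ term. In the second, the trace part contributes $+\frac{4a}n u^{2q-4a-1}\nabla u\cdot\nabla v$, which exactly cancels the leftover $-\frac{4a}n$ term from Step 1. What remains is
\[
u^{q-4a}\big(|T|^2-4a\,u^{-1}\langle T,S\rangle\big),\qquad S=\operatorname{sym}(\nabla u\otimes\nabla v).
\]

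\textbf{Step 3 (the key algebraic bound; main obstacle).} Since $T$ is trace-free, $\langle T,S\rangle=\langle T,S_0\rangle$, where $S_0$ is the trace-free part of $S$. Completing the square then gives
\[
|T|^2-4au^{-1}\langle T,S_0\rangle\ge-4a^2u^{-2}|S_0|^2 .
\]
The point where a cruder estimate would lose the sharp constant is the bound on $|S_0|$. A direct computation gives
\[
|S_0|^2=\tfrac12\big(|\nabla u|^2|\nabla v|^2+(\nabla u\cdot\nabla v)^2\big)-\tfrac1n(\nabla u\cdot\nabla v)^2\le\tfrac{n-1}n|\nabla u|^2|\nabla v|^2 ,
\]
where the inequality uses $(\nabla u\cdot\nabla v)^2\le|\nabla u|^2|\nabla v|^2$ and $\frac12-\frac1n\ge0$. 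This yields the term $-\frac{4(n-1)}na^2u^{q-4a-2}|\nabla u|^2|\nabla v|^2$.

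Adding this to the bracket from Step 1 gives precisely $A_q(a)$, which proves \eqref{eq:b-zero-inequality}. The hypothesis $q/2<a<nq/4$ is used only to make $C_a$ well defined and positive.
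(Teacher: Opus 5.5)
Your proof is correct and takes essentially the same route as the paper. You use the same direct expansion of $\operatorname{div}(u^{2q-4a}\nabla F_a)$, the same cancellation $C_a(4a-2q)-q+\frac{4a}{n}=0$ coming from the trace of $\nabla^2v$, and the same completion of the square against the trace-free part of $\operatorname{sym}(\nabla u\otimes\nabla v)$ (the paper's $T$, your $S_0$), together with the bound $|S_0|^2\le\frac{n-1}{n}|\nabla u|^2|\nabla v|^2$; the paper simply packages your Steps 2--3 into the single square $\bigl|D^2v+\frac{u^q}{n}I-\frac{2a}{u}T\bigr|^2$.
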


\begin{proof}
A straightforward computation yields:
\begin{align*}
\operatorname{div}\left(u^{2q-4a}\nabla F_a\right)
={}&u^{q-4a}
\left(|D^2v|^2-\frac{4a}{u}D^2v(\nabla u,\nabla v)\right)\\
&+\left[\frac{q(q+1)}2+q(2a-q)\right]
u^{q-4a-2}|\nabla u|^2|\nabla v|^2\\
&+\left[C_a(4a-2q)-q\right]
u^{2q-4a-1}\langle\nabla u,\nabla v\rangle\\
&+C_a u^{3q-4a}
+\frac q2\,u^{q-4a-1}v^p|\nabla v|^2.
\end{align*}

To estimate the Hessian terms, introduce the trace-free tensor
\[
T:=\frac{\nabla u\otimes\nabla v+\nabla v\otimes\nabla u}{2}
-\frac{\langle\nabla u,\nabla v\rangle}{n}I.
\]
Since $\Delta v=-u^q$, completing the square yields
\begin{align*}
|D^2v|^2-\frac{4a}{u}D^2v(\nabla u,\nabla v)
={}&
\left|D^2v+\frac{u^q}{n}I-\frac{2a}{u}T\right|^2
-\frac{4a^2}{u^2}|T|^2\\
&+\frac{u^{2q}}n
+\frac{4a}{n}u^{q-1}\langle\nabla u,\nabla v\rangle.
\end{align*}
Moreover,
\[
|T|^2
=\frac12|\nabla u|^2|\nabla v|^2
+\frac{n-2}{2n}\langle\nabla u,\nabla v\rangle^2
\le\frac{n-1}{n}|\nabla u|^2|\nabla v|^2.
\]
Consequently,
\begin{align*}
|D^2v|^2-\frac{4a}{u}D^2v(\nabla u,\nabla v)
\ge{}&
\frac{u^{2q}}n
+\frac{4a}{n}u^{q-1}\langle\nabla u,\nabla v\rangle\\
&-\frac{4(n-1)a^2}{nu^2}|\nabla u|^2|\nabla v|^2.
\end{align*}
Substituting this estimate into the divergence identity proves
\eqref{eq:b-zero-inequality}, since
\[
C_a(4a-2q)-q+\frac{4a}{n}=0.
\]
\end{proof}

The sign of the remaining gradient coefficient explains the
restriction on $q$ in this calculation. Indeed,
\[
A_q\left(\frac q2\right)
=\frac{q}{2n}\bigl(n-(n-2)q\bigr),
\]
and $A_q(a)$ is strictly decreasing for $a>q/2$.
Thus, if $
0<q<\frac{n}{n-2},
$
we may choose $a>q/2$ sufficiently close to $q/2$ so that
$A_q(a)>0$. For this choice, we have:
\[
\operatorname{div}\left(u^{2q-4a}\nabla F_a\right)
\ge
\left(C_a+\frac1n\right)u^{3q-4a}>0.
\]

\subsection{The differential identity}\label{subsec:original-identity}

We first record an algebraic identity involving the Hessian that underlies the computations below. For arbitrary $a,b\in\mathbb R$, define
\begin{equation}\label{eq:original-T}
T_{ij}:=v_{ij}
-a\frac{u_iv_j+u_jv_i}{u}
-b\frac{v_iv_j}{v},
\qquad
\mathring{T}:=T-\frac{\operatorname{tr}T}{n}I.
\end{equation}

\begin{lemma}[Obata-type identity]\label{lem:obata}
For $-\Delta v=u^q$, the following identity holds pointwise:
\begin{align}
&|\nabla^2v|^2
-\frac{4a}{u}v_{ij}u_iv_j
-\frac{2b}{v}v_{ij}v_iv_j
\notag\\
={}&|\mathring{T}|^2
+\frac{2(n-2)a^2}{nu^2}
\bigl(|\nabla u|^2|\nabla v|^2
      -\langle\nabla u,\nabla v\rangle^2\bigr)
\notag\\
&-\frac{4(n-1)a^2}{nu^2}|\nabla u|^2|\nabla v|^2
-\frac{4(n-1)ab}{nuv}|\nabla v|^2
 \langle\nabla u,\nabla v\rangle
-\frac{n-1}{n}b^2\frac{|\nabla v|^4}{v^2}
\notag\\
&+\frac1n u^{2q}
+\frac{4a}{n}u^{q-1}\langle\nabla u,\nabla v\rangle
+\frac{2b}{n}\frac{u^q|\nabla v|^2}{v}.
\label{eq:obata}
\end{align}
\end{lemma}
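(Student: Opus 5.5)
The identity \eqref{eq:obata} is purely algebraic and pointwise; no integration or maximum principle is needed. The plan is to write $T=\nabla^2 v-S$ with the symmetric tensor
\[
S_{ij}:=a\frac{u_iv_j+u_jv_i}{u}+b\frac{v_iv_j}{v},
\]
expand $|T|^2$, and then use $|\mathring T|^2=|T|^2-\frac1n(\operatorname{tr}T)^2$. The equation $-\Delta v=u^q$ enters only through $\operatorname{tr}T$.

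\textbf{Step 1 (expanding $|T|^2$).} Since $S$ is symmetric,
\[
v_{ij}S_{ij}=\frac{2a}{u}v_{ij}u_iv_j+\frac bv v_{ij}v_iv_j .
\]
Hence
\[
|T|^2=|\nabla^2v|^2-\frac{4a}{u}v_{ij}u_iv_j-\frac{2b}{v}v_{ij}v_iv_j+|S|^2.
\]
So the left-hand side of \eqref{eq:obata} equals
\[
|\mathring T|^2+\frac1n(\operatorname{tr}T)^2-|S|^2 .
\]

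\textbf{Step 2 (computing $|S|^2$ and $\operatorname{tr}T$).} Direct expansion gives
\[
|S|^2=\frac{2a^2}{u^2}\bigl(|\nabla u|^2|\nabla v|^2+\langle\nabla u,\nabla v\rangle^2\bigr)
+\frac{4ab}{uv}|\nabla v|^2\langle\nabla u,\nabla v\rangle+b^2\frac{|\nabla v|^4}{v^2}.
\]
Using $\Delta v=-u^q$,
\[
\operatorname{tr}T=-u^q-\frac{2a}{u}\langle\nabla u,\nabla v\rangle-\frac bv|\nabla v|^2 .
\]
Squaring this and dividing by $n$ produces exactly the three terms in the last line of \eqref{eq:obata}:
\[
\frac1n u^{2q}+\frac{4a}{n}u^{q-1}\langle\nabla u,\nabla v\rangle+\frac{2b}{n}\frac{u^q|\nabla v|^2}{v}.
\]
It also produces the quadratic terms
\[
\frac{4a^2}{nu^2}\langle\nabla u,\nabla v\rangle^2,\qquad
\frac{4ab}{nuv}|\nabla v|^2\langle\nabla u,\nabla v\rangle,\qquad
\frac{b^2}{n}\frac{|\nabla v|^4}{v^2}.
\]

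\textbf{Step 3 (matching coefficients).} Collect the quadratic terms from $\frac1n(\operatorname{tr}T)^2-|S|^2$ one monomial at a time.
\begin{itemize}
\item The coefficient of $a^2u^{-2}\langle\nabla u,\nabla v\rangle^2$ is $\frac4n-2=-\frac{2(n-2)}{n}$.
\item The coefficient of $a^2u^{-2}|\nabla u|^2|\nabla v|^2$ is $-2$. This is rewritten as $\frac{2(n-2)}{n}-\frac{4(n-1)}{n}$, which regroups into the $\frac{2(n-2)a^2}{nu^2}(\cdots)$ bracket of \eqref{eq:obata} together with the $-\frac{4(n-1)a^2}{nu^2}$ term.
\item The mixed $ab$ coefficient is $\frac4n-4=-\frac{4(n-1)}{n}$.
\item The $b^2$ coefficient is $\frac1n-1=-\frac{n-1}{n}$.
\end{itemize}
These are exactly the coefficients in \eqref{eq:obata}, which completes the identity.

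The only real obstacle is bookkeeping. The one non-obvious move is splitting the $-2a^2$ coefficient so that the Cauchy--Schwarz-type bracket $|\nabla u|^2|\nabla v|^2-\langle\nabla u,\nabla v\rangle^2\ge0$ appears with a nonnegative coefficient; this is the form in which the identity is used later.
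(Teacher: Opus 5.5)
Your proposal is correct and follows the paper's proof essentially verbatim: you write the left-hand side as $|T|^2-|S|^2$, use $|T|^2=|\mathring{T}|^2+(\operatorname{tr}T)^2/n$ with $\operatorname{tr}T$ computed from $\Delta v=-u^q$, and expand both squares. Your coefficient bookkeeping checks out, including the split $-2=\frac{2(n-2)}{n}-\frac{4(n-1)}{n}$.
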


\begin{proof}
Writing
\[
\operatorname{tr}T
=-u^q-\frac{2a}{u}\langle\nabla u,\nabla v\rangle
-\frac b v|\nabla v|^2,
\]
use $|T|^2=|\mathring{T}|^2+(\operatorname{tr}T)^2/n$ in
\[
\begin{aligned}
|\nabla^2v|^2
-\frac{4a}{u}v_{ij}u_iv_j
-\frac{2b}{v}v_{ij}v_iv_j
=|T|^2
-\left|a\frac{\nabla u\otimes\nabla v+
\nabla v\otimes\nabla u}{u}
+b\frac{\nabla v\otimes\nabla v}{v}\right|^2.
\end{aligned}
\]
Expanding the two squares gives \eqref{eq:obata}.
\end{proof}

For $C>0$ and real parameters $s,t,x,y$, let
\begin{equation}\label{eq:original-H}
H=u^{-2s}v^{2t}
\left(\frac12\frac{|\nabla v|^2}{v^2}
-C\frac{u^q}{v}\right).
\end{equation}
We retain all six parameters $a,b,s,t,x,y$ as independent
in the next identity. 

\begin{lemma}\label{lem:general-identity}
For arbitrary $a,b,s,t,x,y\in\mathbb R$ and $C>0$,
\begin{align}
&u^{2s-x}v^{2-2t-y}
\operatorname{div}(u^xv^y\nabla H)
\notag\\
={}&|\mathring{T}|^2
+\frac{2(n-2)a^2}{nu^2}
 \bigl(|\nabla u|^2|\nabla v|^2
 -\langle\nabla u,\nabla v\rangle^2\bigr)
\notag\\
&+(4a-4s+x)\frac{v_{ij}u_iv_j}{u}
+(2b+4t-4+y)\frac{v_{ij}v_iv_j}{v}
\notag\\
&+\left[-\frac{4(n-1)}n ab+(t-1)x-sy-2s(2t-2)\right]
 \frac{|\nabla v|^2\langle\nabla u,\nabla v\rangle}{uv}
\notag\\
&+\left[\frac{4a}{n}-C(q-2s)y-C(2t-1)x
-q-2C(q-2s)(2t-1)\right]
 u^{q-1}\langle\nabla u,\nabla v\rangle
\notag\\
&+\left[-\frac{4(n-1)}n a^2+s(2s+1-x)\right]
 \frac{|\nabla u|^2|\nabla v|^2}{u^2}
\notag\\
&+\left[-\frac{n-1}{n}b^2+(1-t)(3-2t-y)\right]
 \frac{|\nabla v|^4}{v^2}
\notag\\
&+\left[\frac{2b}{n}-(t-1)
 -C(2t-1)(2t-2+y)\right]
 \frac{u^q|\nabla v|^2}{v}
\notag\\
&-C(q-2s)(q-2s-1+x)u^{q-2}v|\nabla u|^2
\notag\\
&+\left[\frac1n+C(2t-1)\right]u^{2q}
+2su^{2s}v^{2-2t}H\frac{v^p}{u}
+Cq u^{q-1}v^{p+1}.
\label{eq:general-identity}
\end{align}
\end{lemma}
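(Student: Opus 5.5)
This identity is a direct computation, and I would organize it so that Lemma~\ref{lem:obata} absorbs all the Hessian-squared terms. Write $H=u^{-2s}v^{2t}G$ with
\[
G=\tfrac12 v^{-2}|\nabla v|^2-Cu^qv^{-1}.
\]
The plan is to expand
\[
\operatorname{div}(u^xv^y\nabla H)=u^xv^y\Bigl(\Delta H+\bigl(x\tfrac{\nabla u}{u}+y\tfrac{\nabla v}{v}\bigr)\cdot\nabla H\Bigr)
\]
and then multiply by $u^{2s-x}v^{2-2t-y}$. The resulting normalization is $v^2u^{2s}v^{-2t}\bigl(\Delta H+\dots\bigr)$. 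After this rescaling the leading term $\tfrac12\Delta(v^{-2}|\nabla v|^2)$ contributes exactly $|\nabla^2 v|^2$ together with lower-order terms.

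First I compute $\nabla H$. Writing $\nabla H=u^{-2s}v^{2t}\bigl(\nabla G+G(-2s\tfrac{\nabla u}{u}+2t\tfrac{\nabla v}{v})\bigr)$, I need
\[
\nabla G=\frac{v_{ij}v_j}{v^2}-\frac{|\nabla v|^2\nabla v}{v^3}-Cqu^{q-1}\frac{\nabla u}{v}+Cu^q\frac{\nabla v}{v^2}.
\]
Next I take $\Delta H$ via the product rule. For this I use:
\begin{itemize}
\item the Bochner formula $\tfrac12\Delta|\nabla v|^2=|\nabla^2v|^2+\nabla v\cdot\nabla\Delta v$;
\item $\nabla\Delta v=-qu^{q-1}\nabla u$;
\item $\Delta u=-v^p$ and $\Delta v=-u^q$;
\item $\Delta(u^{-2s})=-2su^{-2s-1}\Delta u+2s(2s+1)u^{-2s-2}|\nabla u|^2$, and similarly for $v^{2t}$.
\end{itemize}
The terms involving $\Delta u=-v^p$ occur only through $\Delta(u^{-2s})$ and through $\Delta u^q$ inside $G$. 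They produce $2su^{2s}v^{2-2t}Hv^p/u$ and $Cqu^{q-1}v^{p+1}$ respectively, and these should match the last two terms of \eqref{eq:general-identity}.

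Then I collect the Hessian terms: $|\nabla^2v|^2$, $v_{ij}u_iv_j/u$ and $v_{ij}v_iv_j/v$. To obtain the form stated, I add and subtract $\tfrac{4a}{u}v_{ij}u_iv_j+\tfrac{2b}{v}v_{ij}v_iv_j$ and replace the resulting Hessian combination by the right side of \eqref{eq:obata}. This is exactly why the coefficients $(4a-4s+x)$ and $(2b+4t-4+y)$ appear.

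The remaining terms are lower order, namely monomials in $|\nabla u|^2|\nabla v|^2/u^2$, $|\nabla v|^4/v^2$, $|\nabla v|^2\langle\nabla u,\nabla v\rangle/(uv)$, $u^{q-1}\langle\nabla u,\nabla v\rangle$, $u^q|\nabla v|^2/v$, $u^{q-2}v|\nabla u|^2$ and $u^{2q}$. I group them by monomial and add the contributions from \eqref{eq:obata}. Whenever a term is proportional to $G$, I expand it as $\tfrac12|\nabla v|^2/v^2-Cu^q/v$ before grouping, except for the $v^p$ term, which I keep in $H$-form.

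The main obstacle is purely the bookkeeping of the cross terms. For example, the coefficient $(t-1)x-sy-2s(2t-2)$ comes from three separate sources: the drift $x\nabla u/u$ applied to the $v$-derivatives of $G$, the drift $y\nabla v/v$ applied to the $u$-weight, and the cross term $2\nabla(u^{-2s})\cdot\nabla(v^{2t}\cdot)$. Each of these also hits the $-|\nabla v|^2\nabla v/v^3$ part of $\nabla G$. To control errors, I would verify each bracket by a consistency check. At $s=t=x=y=0$ and $a=b=0$, the identity should reduce to the plain Bochner computation for $\tfrac12|\nabla v|^2/v^2-Cu^q/v$. Checking the pure $C$-terms at $a=b=0$, for instance the $u^{2q}$ coefficient $C(2t-1)$ coming from $Cu^q\Delta v/v$ scaled by $v^{2t}$, pins down signs separately from the Hessian bookkeeping.
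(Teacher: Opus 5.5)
Your proposal is correct and is essentially the paper's argument: expand by the product rule and Bochner's formula, substitute $\Delta u=-v^p$, $\Delta v=-u^q$, absorb the Hessian terms with Lemma~\ref{lem:obata}, and collect coefficients. The only difference is cosmetic. The paper factors $H=u^{-2s}v^{2t-2}\bigl(\tfrac12|\nabla v|^2-Cu^qv\bigr)$, so the drift $(x-4s)\nabla u/u+(y+4t-4)\nabla v/v$ appears in one step, whereas in your $u^{-2s}v^{2t}G$ splitting the $-4$ comes separately from differentiating the $v^{-2}$ inside $G$.
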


\begin{proof}
By straightforward computation:
\begin{align*}
&\Delta\left(\frac12|\nabla v|^2-Cu^qv\right)
=|\nabla^2v|^2
-q(1+2C)u^{q-1}\langle\nabla u,\nabla v\rangle
+C u^{2q}\\
&\hspace{27mm}
+Cq u^{q-1}v^{p+1}
-Cq(q-1)u^{q-2}v|\nabla u|^2,\\
&\nabla\left(\frac12|\nabla v|^2-Cu^qv\right)
=\nabla^2v(\nabla v,\cdot)
-Cqu^{q-1}v\nabla u-Cu^q\nabla v.
\end{align*}
Applying the product rule to \eqref{eq:original-H}, we get
\begin{align*}
&u^{2s-x}v^{2-2t-y}
\operatorname{div}(u^xv^y\nabla H)\\
={}&\Delta\left(\frac12|\nabla v|^2-Cu^qv\right)\\
&+\left[(x-4s)\frac{\nabla u}{u}
+(y+4t-4)\frac{\nabla v}{v}\right]
 \cdot\nabla\left(\frac12|\nabla v|^2-Cu^qv\right)\\
&+\left[2s\frac{v^p}{u}-(2t-2)\frac{u^q}{v}
+2s(2s+1-x)\frac{|\nabla u|^2}{u^2}\right.\\
&\left.\hspace{16mm}
+(2t-2)(2t-3+y)\frac{|\nabla v|^2}{v^2}
+\bigl((2t-2)x-2sy-4s(2t-2)\bigr)
 \frac{\langle\nabla u,\nabla v\rangle}{uv}\right]\\
&\qquad\times\left(\frac12|\nabla v|^2-Cu^qv\right).
\end{align*}
We then apply Lemma~\ref{lem:obata} to the Hessian terms and
collect the coefficients. This proves \eqref{eq:general-identity}.
\end{proof}

\subsection{Choice of the coefficients}\label{subsec:parameters}

We eliminate the two extra Hessian terms and the two mixed-gradient
terms in Lemma~\ref{lem:general-identity}. Their coefficients give
\begin{equation}\label{eq:four-cancellations}
\begin{cases}
4a-4s+x=0,\\
2b+4t-4+y=0,\\
-\dfrac{4(n-1)}n ab+(t-1)x-sy-2s(2t-2)=0,\\
\dfrac{4a}{n}-C(q-2s)y-C(2t-1)x-q-2C(q-2s)(2t-1)=0.
\end{cases}
\end{equation}
We note that the term $4a/n$ in the last equation comes from the trace of
$T$ in Lemma~\ref{lem:obata}.

Set $t=1$. The first three equations reduce to
\[
x=4s-4a,\qquad y=-2b,\qquad
\left(2s-\frac{4(n-1)}n a\right)b=0.
\]
We work with the branch $b\ne0$, obtaining
\begin{equation}\label{eq:sxy}
s=\frac{2(n-1)}n a,\qquad x=4s-4a,\qquad y=-2b.
\end{equation}
Substitution into the fourth coefficient gives exactly
\[
\frac{4a}{n}-q
+C\bigl[2b(q-2s)-x-2(q-2s)\bigr].
\]
It vanishes if and only if
\begin{equation}\label{eq:b-cancellation}
C\left(4a-2q+2bq-\frac{8(n-1)}n ab\right)
=q-\frac{4a}{n},
\end{equation}
so that
\begin{equation}\label{eq:b-choice}
b=\frac{q-2a+\frac1{2C}(q-4a/n)}{q-2s}.
\end{equation}

The remaining two terms involving $|\nabla u|^2$ are collected into a multiple of $H$. Put
\begin{equation}\label{eq:k-choice}
k:=2s\left(1-\frac x2\right).
\end{equation}
The relations \eqref{eq:sxy} yield the exact identities

\begin{align}
-\frac{4(n-1)}n a^2+s(2s+1-x)&=\frac k2,\notag\\
(q-2s)(q-2s-1+x)-k
&=q(q-1)+\frac{8(n-1)}n a^2-4aq
\label{eq:coefficient-computation}
\end{align}

so that, using $u^{2s}H=\frac12|\nabla v|^2-Cu^qv$, we have:
\begin{align}
&u^{-2s}\biggl[
\left(-\frac{4(n-1)}n a^2+s(2s+1-x)\right)
\frac{|\nabla u|^2|\nabla v|^2}{u^2}
-C(q-2s)(q-2s-1+x)u^{q-2}v|\nabla u|^2
\biggr]\notag\\
={}&k H\frac{|\nabla u|^2}{u^2}
-C\left(q(q-1)+\frac{8(n-1)}n a^2-4aq\right)
 u^{q-2s-2}v|\nabla u|^2.
\label{eq:H-gradient-computation}
\end{align}
This computation explains the final condition we impose:
\begin{equation}\label{eq:a-constraint}
q(q-1)+\frac{8(n-1)}n a^2-4aq=0,
\end{equation}
which reduces \eqref{eq:H-gradient-computation} to
$kH|\nabla u|^2/u^2$.

We choose the smaller root of \eqref{eq:a-constraint}:
\begin{equation}\label{eq:a-choice}
2s=q-\sqrt{\frac{q[2(n-1)-(n-2)q]}n},
\qquad a=\frac{ns}{2(n-1)},
\qquad x=\frac{2(n-2)s}{n-1}.
\end{equation}
Indeed,
\[
(q-2s)^2-1=\frac{(q-1)[n-(n-2)q]}n>0,
\qquad 0<2s<q-1<\frac2{n-2}.
\]
This ensures the growth condition of Lemma~\ref{lem:weighted-bernstein}. Moreover,
\[
k=(q-2s)(q-2s-1+x)>0.
\]
In particular, \eqref{eq:b-choice} gives $b>0$, as required for
our choice of branch. The coefficients $a,s,x$ depend only on $n,q$ while
$b,y$ also depend on $C$.

\subsection{The inequalities used in the proof}\label{subsec:final-identity}

For every $\lambda>0$, we make the following choices from
Section~\ref{subsec:parameters}:
\begin{equation}\label{eq:chosen-parameters}
\begin{cases}
C=\dfrac1{2\lambda},\qquad t=1,\\[3pt]
s=\dfrac12\left(q-\sqrt{\dfrac{q[2(n-1)-(n-2)q]}{n}}\right),\\[6pt]
a=\dfrac{ns}{2(n-1)},\qquad x=4s-4a=\dfrac{2(n-2)s}{n-1},\\[6pt]
b=\dfrac{q-2a+\lambda(q-4a/n)}{q-2s},\qquad y=-2b.
\end{cases}
\end{equation}
In particular, $0<2s<2/(n-2)$, $b>0$, and
$k=2s(1-x/2)>0$. We first record the identity obtained by
substituting \eqref{eq:chosen-parameters} into
Lemma~\ref{lem:general-identity}, retaining the original $H$.

\begin{corollary}\label{cor:H-identity}
With the coefficients in \eqref{eq:chosen-parameters},
the function $H$ in \eqref{eq:original-H} satisfies
\begin{align}
&u^{-x}v^{2b}\operatorname{div}(u^xv^{-2b}\nabla H)
\notag\\
={}&u^{-2s}|\mathring{T}|^2
+\frac{2(n-2)a^2}{n}u^{-2s-2}
\bigl(|\nabla u|^2|\nabla v|^2
-\langle\nabla u,\nabla v\rangle^2\bigr)
\notag\\
&+k H\frac{|\nabla u|^2}{u^2}
+u^{-2s}\left[
-\frac{n-1}{n}b^2\frac{|\nabla v|^4}{v^2}
+\left(C+\frac1n\right)
\left(2b\frac{u^q|\nabla v|^2}{v}+u^{2q}\right)
\right]
\notag\\
&+2sH\frac{v^p}{u}
+qC u^{q-2s-1}v^{p+1},
\label{eq:H-identity}
\end{align}
\end{corollary}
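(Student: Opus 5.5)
The plan is to specialize Lemma~\ref{lem:general-identity} to the parameters \eqref{eq:chosen-parameters}, then multiply through by $u^{-2s}$. With $t=1$ we have $v^{2t}=v^2$, and $y=-2b$, so the prefactor $u^{2s-x}v^{2-2t-y}=u^{2s-x}v^{2b}$. Dividing by $u^{2s}$ gives exactly the left-hand side $u^{-x}v^{2b}\operatorname{div}(u^xv^{-2b}\nabla H)$. The proof then reduces to tracking each bracketed coefficient in \eqref{eq:general-identity} under the chosen values and checking that the result is \eqref{eq:H-identity}, term by term, after the factor $u^{-2s}$ is inserted.

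First, the four quantities in \eqref{eq:four-cancellations} vanish: the first three by \eqref{eq:sxy}. For the fourth, $C=1/(2\lambda)$ turns \eqref{eq:b-choice} into the stated formula for $b$, and this is equivalent to \eqref{eq:b-cancellation}. I will verify that equivalence once, by clearing denominators, which uses $q-2s\neq 0$. This removes both Hessian cross terms and both mixed terms $|\nabla v|^2\langle\nabla u,\nabla v\rangle$ and $u^{q-1}\langle\nabla u,\nabla v\rangle$.

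Second, I combine the two $|\nabla u|^2$ terms. By \eqref{eq:coefficient-computation}, \eqref{eq:H-gradient-computation} and the constraint \eqref{eq:a-constraint}, which holds because $2s$ is a root of it (using $a=ns/(2(n-1))$), their combination after the factor $u^{-2s}$ is exactly $kH|\nabla u|^2/u^2$. I will double-check that $k=2s(1-x/2)$ matches the first line of \eqref{eq:coefficient-computation} with $s=2(n-1)a/n$.

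Third, the remaining coefficients at $t=1$:
\[
-\frac{n-1}{n}b^2+(1-t)(3-2t-y)=-\frac{n-1}{n}b^2 .
\]
The $u^q|\nabla v|^2/v$ coefficient becomes
\[
\frac{2b}{n}-C(2t-2+y)=\frac{2b}{n}+2bC=2b\Bigl(C+\frac1n\Bigr),
\]
and the $u^{2q}$ coefficient is $\frac1n+C$. The final terms $2su^{2s}v^{2-2t}Hv^p/u$ and $Cqu^{q-1}v^{p+1}$, after multiplication by $u^{-2s}$, give $2sHv^p/u$ and $qCu^{q-2s-1}v^{p+1}$.

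The main obstacle is purely bookkeeping. The fourth cancellation is the step most prone to error, since it mixes $C$, $b$, $s$ and $a$ through $x=4s-4a$ and $s=2(n-1)a/n$. To handle it, I will substitute $2s(q-2s)$-type expressions explicitly and confirm that the bracket $2b(q-2s)-x-2(q-2s)$ equals $2bq-\frac{8(n-1)}n ab+4a-2q$. Nothing beyond the identities already established in Section~\ref{subsec:parameters} is needed.
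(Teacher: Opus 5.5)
Your proposal is correct and follows the paper's proof: specialize Lemma~\ref{lem:general-identity} at the parameters \eqref{eq:chosen-parameters}, multiply by $u^{-2s}$, kill the four mixed coefficients via \eqref{eq:four-cancellations}, merge the two $|\nabla u|^2$ terms into $kH|\nabla u|^2/u^2$ using \eqref{eq:H-gradient-computation}--\eqref{eq:a-constraint}, and read off the remaining coefficients. Your explicit checks all hold and fill in bookkeeping the paper leaves implicit: the bracket identity $2b(q-2s)-x-2(q-2s)=2bq-\frac{8(n-1)}{n}ab+4a-2q$, the coefficient $2b\bigl(C+\frac1n\bigr)$, and the use of $q-2s>0$.
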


\begin{proof}
Substitute the coefficients from Section~\ref{subsec:parameters}
into Lemma~\ref{lem:general-identity}. All four mixed
coefficients vanish by \eqref{eq:four-cancellations}, and
\eqref{eq:H-gradient-computation}--\eqref{eq:a-constraint}
combine the $|\nabla u|^2$ terms into $kH|\nabla u|^2/u^2$.
The remaining terms give \eqref{eq:H-identity}.
\end{proof}

The identity \eqref{eq:H-identity} already exhibits nice cancellations and some positivity estimates,
but the cutoff argument is more naturally applied after dividing
by $v^2$. On the set  $\{H >0 \}$, its immediate upper bound
involves $u^{-2s}|\nabla v|^2$, whereas the weighted Bernstein type lemma \ref{lem:weighted-bernstein} is formulated using $u^{-2s}|\nabla v|^2/v^2$.

We therefore consider
\begin{equation}\label{eq:Z-choice}
Z_\lambda:=\frac{2\lambda}{v^2}H
=u^{-2s}\left(\lambda\frac{|\nabla v|^2}{v^2}
-\frac{u^q}{v}\right).
\end{equation}
The resulting drift contains $x\nabla u/u$, for which no
independent pointwise bound is available. Although
\eqref{eq:H-identity} contains no explicit mixed-gradient term,
the product rule for $H=v^2Z_\lambda/(2\lambda)$ introduces
$-2xZ_\lambda\langle\nabla u,\nabla v\rangle/(uv)$.
Completing the square using $k Z_\lambda|\nabla u|^2/u^2$
yields a nonnegative term that controls this drift at a positive
maximum of the cutoff-modified quantity. Such a term naturally
arises when modifying the drift of the elliptic operator.

For the remaining terms, set
\begin{align}
p_\lambda
&:=-\frac{2(n-1)}n b^2+2b
+\lambda\left(1+\frac{4b}{n}\right)
+\frac{2\lambda^2}{n}\notag\\
&=\frac{(1+2\lambda)
[n-(n-2)q-\lambda((n-2)q-2)]}
{2n-2-(n-2)q}.
\label{eq:p-formula}
\end{align}
The following is the form used in the proof of the pointwise estimate.

\begin{corollary}[The sub-elliptic differential inequality]
\label{cor:final-identity}
For every $\lambda>0$, the function $Z_\lambda$ satisfies
\begin{align}
&\Delta Z_\lambda
+\left(x\frac{\nabla u}{u}
+(4-2b)\frac{\nabla v}{v}\right)\cdot\nabla Z_\lambda
\notag\\
={}&\frac{2\lambda u^{-2s}}{v^2}|\mathring{T}|^2
+\frac{4\lambda(n-2)a^2}{n}u^{-2s-2}
\frac{|\nabla u|^2|\nabla v|^2
-\langle\nabla u,\nabla v\rangle^2}{v^2}
\notag\\
&+k Z_\lambda
\left|\frac{\nabla u}{u}-\frac{x}{k}\frac{\nabla v}{v}\right|^2
+\lambda p_\lambda u^{-2s}\frac{|\nabla v|^4}{v^4}
\notag\\
&+\left[2(b-1)-\frac{x^2}{k}
-\frac{4\lambda}{n}(b+\lambda)\right]
 Z_\lambda\frac{|\nabla v|^2}{v^2}
+\left(\frac{2\lambda}{n}-1\right)u^{2s}Z_\lambda^2
\notag\\
&+2sZ_\lambda\frac{v^p}{u}
+q u^{q-2s-1}v^{p-1}.
\label{eq:Z-identity}
\end{align}
\end{corollary}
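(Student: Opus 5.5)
We derive \eqref{eq:Z-identity} from Corollary~\ref{cor:H-identity} by the product rule for $H=v^2Z_\lambda/(2\lambda)$, with $C=1/(2\lambda)$. Write $L:=u^{-x}v^{2b}\operatorname{div}(u^xv^{-2b}\nabla\,\cdot)=\Delta+(x\nabla u/u-2b\nabla v/v)\cdot\nabla$. Since $\Delta(v^2)=2|\nabla v|^2-2vu^q$ and $\nabla v^2=2v\nabla v$, we get
\[
L(v^2Z)=v^2\Big[\Delta Z+\Big(x\frac{\nabla u}{u}+(4-2b)\frac{\nabla v}{v}\Big)\cdot\nabla Z\Big]
+Z\Big[(2-4b)|\nabla v|^2-2vu^q+2x\frac{v\langle\nabla u,\nabla v\rangle}{u}\Big].
\]
This already produces the drift on the left of \eqref{eq:Z-identity}. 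Multiplying \eqref{eq:H-identity} by $2\lambda/v^2$ and moving the zeroth-order $Z$-terms to the right therefore gives
\[
\Delta Z+\Big(x\frac{\nabla u}{u}+(4-2b)\frac{\nabla v}{v}\Big)\cdot\nabla Z
=\frac{2\lambda}{v^2}L H-Z\Big[(2-4b)\frac{|\nabla v|^2}{v^2}-\frac{2u^q}{v}+2x\frac{\langle\nabla u,\nabla v\rangle}{uv}\Big].
\]

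Next, I will rewrite each term of $(2\lambda/v^2)LH$ in terms of $Z$. The $|\mathring T|^2$ and $|\nabla u|^2|\nabla v|^2-\langle\nabla u,\nabla v\rangle^2$ terms transfer directly, with factor $2\lambda/v^2$. The term $kH|\nabla u|^2/u^2$ becomes $kZ|\nabla u|^2/u^2$. Combining it with $-2xZ\langle\nabla u,\nabla v\rangle/(uv)$ and completing the square gives
\[
kZ\left|\frac{\nabla u}{u}-\frac xk\frac{\nabla v}{v}\right|^2-\frac{x^2}{k}Z\frac{|\nabla v|^2}{v^2}.
\]
The term $2sH v^p/u$ becomes $2sZv^p/u$, and $qCu^{q-2s-1}v^{p+1}\cdot 2\lambda/v^2=qu^{q-2s-1}v^{p-1}$.

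The remaining pure terms are
\[
\frac{2\lambda u^{-2s}}{v^2}\Big[-\tfrac{n-1}{n}b^2\frac{|\nabla v|^4}{v^2}+\big(C+\tfrac1n\big)\big(2b\,u^q|\nabla v|^2/v+u^{2q}\big)\Big]+2Z\frac{u^q}{v}-(2-4b)Z\frac{|\nabla v|^2}{v^2}.
\]
I will eliminate $u^q/v$ everywhere using $u^q/v=\lambda|\nabla v|^2/v^2-u^{2s}Z$. This expresses the terms as a quadratic form in $P:=|\nabla v|^2/v^2$ and $u^{2s}Z$, multiplied by $u^{-2s}$. I then match coefficients.

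\begin{itemize}
\item The $Z^2$ coefficient comes from $u^{2q}/v^2=(\lambda P-u^{2s}Z)^2$ with weight $2\lambda(C+1/n)=1+2\lambda/n$, together with $-2u^{2s}Z^2$ from the $2Zu^q/v$ term. This gives $(2\lambda/n-1)u^{2s}Z^2$.
\item The $ZP$ coefficient collects $-4b\lambda(C+1/n)$, $-2\lambda^2(1+2\lambda/n)/\lambda$, $2\lambda$ and $-(2-4b)$. It should reduce to $2(b-1)-\frac{4\lambda}{n}(b+\lambda)$.
\item The $P^2$ coefficient is $2\lambda[-\frac{n-1}n b^2+b\lambda(C+\frac1n)\cdot 2+\lambda^2(C+\frac1n)]$. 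This reduces to $\lambda p_\lambda$ with $p_\lambda$ as in the first line of \eqref{eq:p-formula}.
\end{itemize}

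The main obstacle is purely the bookkeeping of these three coefficients. I will check them carefully with $C=1/(2\lambda)$. The second, closed form of $p_\lambda$ in \eqref{eq:p-formula} then follows by substituting $b$ from \eqref{eq:chosen-parameters} and using the constraint \eqref{eq:a-constraint} to simplify $q-2s$ and $a$.
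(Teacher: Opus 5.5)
Your proposal is correct and follows the same route as the paper. It uses the product-rule conversion for $H=v^2Z_\lambda/(2\lambda)$ (the paper's \eqref{eq:conversion}) and inserts Corollary~\ref{cor:H-identity}. It then substitutes $u^q/v=\lambda|\nabla v|^2/v^2-u^{2s}Z_\lambda$ and completes the square in the $\nabla u$ terms. Your three coefficient computations, $2\lambda/n-1$, $2(b-1)-\frac{4\lambda}{n}(b+\lambda)$ and $\lambda p_\lambda$, all check out with $C=1/(2\lambda)$. The closed form in the second line of \eqref{eq:p-formula} is not needed for this corollary, since only the first line (the definition of $p_\lambda$) enters.
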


\begin{proof}
Since $H=v^2Z_\lambda/(2\lambda)$, the product rule gives
\begin{align}
&\frac{2\lambda}{v^2}
 u^{-x}v^{2b}\operatorname{div}(u^xv^{-2b}\nabla H)
\notag\\
={}&\Delta Z_\lambda
+\left(x\frac{\nabla u}{u}
+(4-2b)\frac{\nabla v}{v}\right)\cdot\nabla Z_\lambda
\notag\\
&+\left[-2\frac{u^q}{v}
+(2-4b)\frac{|\nabla v|^2}{v^2}
+2x\frac{\langle\nabla u,\nabla v\rangle}{uv}\right]Z_\lambda.
\label{eq:conversion}
\end{align}
Insert Corollary~\ref{cor:H-identity} into
\eqref{eq:conversion} and substitute
\[
\frac{u^q}{v}
=\lambda\frac{|\nabla v|^2}{v^2}-u^{2s}Z_\lambda.
\]
The quartic terms combine into
$\lambda p_\lambda u^{-2s}|\nabla v|^4/v^4$ by
\eqref{eq:p-formula}. The terms proportional to $Z_\lambda$ containing
$\nabla u$ are
\begin{align*}
&k Z_\lambda\frac{|\nabla u|^2}{u^2}
-2xZ_\lambda\frac{\langle\nabla u,\nabla v\rangle}{uv}\\
&\quad=k Z_\lambda
\left|\frac{\nabla u}{u}-\frac{x}{k}\frac{\nabla v}{v}\right|^2
-\frac{x^2}{k}Z_\lambda\frac{|\nabla v|^2}{v^2}.
\end{align*}
Here $k>0$ by \eqref{eq:k-choice}. Collecting the remaining
terms proves \eqref{eq:Z-identity}.
\end{proof}

\section{Proof of Theorem~\ref{thm:scalar}}\label{sec:main}

\begin{proof}[Proof of Theorem~\ref{thm:scalar}]
Set
\[
\lambda_*:=\frac{n-(n-2)q}{(n-2)q-2},
\qquad
A_\lambda:=2(b-1)-\frac{x^2}{k}
-\frac{4\lambda}{n}(b+\lambda).
\]
By \eqref{eq:p-formula}, $p_\lambda>0$ for
$0<\lambda<\lambda_*$ and $p_{\lambda_*}=0$.

\medskip
\noindent\textbf{Step 1:} $Z_\lambda \le 0$ for sufficiently small $\lambda$.

As $\lambda\downarrow0$, we have
$b\to1+x/[2(q-2s)]$, and hence
\[
A_\lambda\longrightarrow
\frac{x(q-2s-1)}{(q-2s)(q-2s-1+x)}>0.
\]
Thus $A_\lambda\ge\lambda$ for all sufficiently small
$\lambda>0$. On $\{Z_\lambda>0\}$,
\[
0<u^{2s}Z_\lambda
\le\lambda\frac{|\nabla v|^2}{v^2}.
\]
Since $2\lambda/n-1\ge-1$, it follows that
\[
A_\lambda Z_\lambda\frac{|\nabla v|^2}{v^2}
+\left(\frac{2\lambda}{n}-1\right)u^{2s}Z_\lambda^2
\ge0.
\]
Set
\[
X:=x\left(\frac{\nabla u}{u}
-\frac{x}{k}\frac{\nabla v}{v}\right),
\qquad
\beta_\lambda:=4-2b+\frac{x^2}{k}.
\]
Discarding the remaining non-negative terms in
\eqref{eq:Z-identity}, we obtain, on $\{Z_\lambda>0\}$,
\[
\Delta Z_\lambda+
\left(X+\beta_\lambda\frac{\nabla v}{v}\right)
\cdot\nabla Z_\lambda
\ge
\lambda p_\lambda u^{-2s}\frac{|\nabla v|^4}{v^4}
+\frac{k}{x^2}Z_\lambda|X|^2.
\]
Lemma~\ref{lem:weighted-bernstein} therefore gives
$Z_\lambda\le0$ for all sufficiently small $\lambda>0$.
We again note that the required growth condition follows from
Lemma~\ref{lem:lower-barrier} and $2s<2/(n-2)$.

\medskip
\noindent\textbf{Step 2:} The continuity method.

Suppose that $Z_{\lambda_0}\le0$ for some
$0<\lambda_0<\lambda_*$. For $\lambda=\lambda_0+h$,
on $\{Z_\lambda>0\}$ we have
\[
0<u^{2s}Z_\lambda
=u^{2s}Z_{\lambda_0}
+h\frac{|\nabla v|^2}{v^2}
\le h\frac{|\nabla v|^2}{v^2}.
\]
Since $A_\lambda$ and $2\lambda/n-1$ are continuous in
$\lambda$, we may choose $M>0$ such that
\[
|A_\lambda|\le M,
\qquad
\left|\frac{2\lambda}{n}-1\right|\le M
\]
for $\lambda$ near $\lambda_0$. On $\{Z_\lambda>0\}$,
the bound
$Z_\lambda\le h u^{-2s}|\nabla v|^2/v^2$ gives
\begin{align*}
A_\lambda Z_\lambda\frac{|\nabla v|^2}{v^2}
&\ge -MZ_\lambda\frac{|\nabla v|^2}{v^2}
\ge -Mh\,u^{-2s}\frac{|\nabla v|^4}{v^4},\\
\left(\frac{2\lambda}{n}-1\right)u^{2s}Z_\lambda^2
&\ge -Mu^{2s}Z_\lambda^2
\ge -Mh^2u^{-2s}\frac{|\nabla v|^4}{v^4}.
\end{align*}
Substituting these estimates into \eqref{eq:Z-identity}
and discarding the remaining non-negative terms yields
\[
\Delta Z_\lambda+
\left(X+\beta_\lambda\frac{\nabla v}{v}\right)
\cdot\nabla Z_\lambda
\ge
\bigl[\lambda p_\lambda-M(h+h^2)\bigr]
u^{-2s}\frac{|\nabla v|^4}{v^4}
+\frac{k}{x^2}Z_\lambda|X|^2.
\]
As $h\downarrow0$, the coefficient in brackets converges to
$\lambda_0p_{\lambda_0}>0$, and is therefore positive for
all sufficiently small $h>0$. Together with
\[
0<Z_\lambda\le h u^{-2s}\frac{|\nabla v|^2}{v^2}
\qquad\text{on }\{Z_\lambda>0\},
\]
this verifies the hypotheses of
Lemma \ref{lem:weighted-bernstein}, with the same growth
condition as in Step 1. Hence $Z_{\lambda_0+h}\le0$.

By pointwise continuity and monotonicity in $\lambda$, the set
$
\left\{\lambda\in[0,\lambda_*]:
Z_\lambda\le0\text{ in }\mathbb R^n\right\}
$
is a closed interval. Step 1 shows that this interval has
positive length, while Step 2 proves that upper endpoint is exactly $\lambda_*$. Consequently, $Z_{\lambda_*}\le0$, or
equivalently,
\[
\frac12|\nabla v|^2
\le\frac{(n-2)q-2}{2[n-(n-2)q]}u^qv.
\]

If equality held at some point, then at that point we would have
$Z_\lambda=0$, $\nabla Z_\lambda=0$, and $\Delta Z_\lambda\le0$.
The identity \eqref{eq:Z-identity}, with $p_\lambda=0$, would then yield
\[
0\ge\Delta Z_\lambda
\ge q u^{q-2s-1}v^{p-1}>0,
\]
a contradiction. Hence the estimate is strict.
\end{proof}

\begin{remark}[Sharpness on the critical hyperbola]\label{rem:sharp-Cq}
Suppose in addition that
\[
\frac{1}{p+1}+\frac{1}{q+1}=\frac{n-2}{n}.
\]
Let $(u,v)$ be a positive radial ground state of the system.
Such solutions exist and satisfy the asymptotic behavior
(see \cite{HulshofVanDerVorst1996})
\[
u(r)=A r^{2-n}(1+o(1))
\qquad\text{as }r\to\infty,
\]
for some $A>0$. Integrating the radial equation
$-\Delta v=u^q$ and using $(n-2)q<n$, we find
\begin{align*}
-v'(r)
&=r^{1-n}\int_0^r \rho^{n-1}u(\rho)^q\,d\rho\\
&\sim\frac{A^q}{n-(n-2)q}\,r^{1-(n-2)q}.
\end{align*}
Since $v(r)\to0$ and $(n-2)q>2$, integration from
$r$ to infinity gives
\[
v(r)\sim
\frac{A^q}{[(n-2)q-2][n-(n-2)q]}
\,r^{2-(n-2)q}.
\]
Consequently,
\[
\lim_{r\to\infty}\frac{|\nabla v(r)|^2}{2u(r)^qv(r)}
=\frac{(n-2)q-2}{2[n-(n-2)q]}.
\]
Thus the coefficient in Theorem~\ref{thm:scalar} is optimal
among positive entire solutions on the critical hyperbola,
even though the inequality is strict at every finite point.
\end{remark}

\bibliographystyle{abbrv}
\bibliography{lane}

\end{document}